\documentclass[a4paper,10pt]{article}
\usepackage{fullpage}
\usepackage[utf8]{inputenc}
\usepackage{amsmath,amssymb,amsthm}
\usepackage{graphicx}
\usepackage[numbers]{natbib}
\usepackage{url}

\theoremstyle{plain}
\newtheorem{theorem}{Theorem}[section]

\newtheorem{corollary}[theorem]{Corollary}

\newtheorem{conjecture}[theorem]{Conjecture}

\theoremstyle{definition}
\newtheorem{definition}[theorem]{Definition}

\newtheorem*{acknowledgement}{Acknowledgement}

\title{Producing Quality Pseudorandomness with a Generalized Gauss Continued-Fraction Map}
\author{Benjamin V. Holt\\{\it Department of Mathematics, Southwestern Oregon Community College}\\{\tt benjamin.holt@socc.edu}}

\begin{document}

\maketitle

\begin{abstract}
Well-known chaotic maps, such as the logistic and tent maps, have been used to generate cryptographically secure pseudorandomness, yet we know of no efforts which attempt to utilize the Gauss continued-fraction map, a known chaotic map, as a starting point for producing quality pseudorandom output. In this paper, we consider the family of $r$-continued-fraction maps, which generalize the Gauss map, and use them to generate pseudorandom output which outperforms many standard generators, such as the Mersenne Twister, in statistical quality, as ascertained by the use of the Dieharder, PractRand, and TestU01 suites. In this way, we demonstrate the potential viability of these maps as a starting point for novel generators, and provide practical motivation for further study of the properties of both the exact and finite-precision $r$-continued-fraction maps.
\end{abstract}

\maketitle

\section{Introduction}\label{intro}

Monte Carlo simulation, cryptographically secure communications, and machine learning are among the most important applications of pseudorandom number generators \cite{ahmad, antunes,araki,behnia,foreman,naik,yang} (PRNGs). Unlike a sequence of numbers generated by a truly random process, pseudorandom sequences are created by a deterministic algorithm; they are not random at all. That said, ``good'' PRNGs produce sequences of numbers which are difficult to distinguish from truly random sequences. This is to say that sequences produced by a good PRNG should have statistical properties which are at least similar to those of sequences generated by a truly random process.
Chaotic maps have been successfully applied to generating pseudorandom sequences \cite{ahmad,araki, behnia,irfan,naik,yang}. The \textit{logistic map} stands out among these as it has been modified to create secure PRNG algorithms \cite{ahmad,irfan,naik,wang}. Other efforts to modify chaotic maps for the purposes of creating PRNGs include variations on the \textit{tent map} \cite{irfan, yang}. In this paper, we develop a PRNG which is novel in its use of a modification of the lesser-known, but nonetheless chaotic \cite{corless}, Gauss continued-fraction map. The sequences produced by this modified map pass several well-known batteries of statistical randomness tests, demonstrating the Gauss continued-fraction map's promise as a starting point for future development of chaotic PRNGs.

We begin by introducing the {\it Gauss continued-fraction map}, or more simply, the {\it Gauss map}, $G:[0,1)\rightarrow[0,1)$, defined by
\begin{equation}\label{GCF}
 G(x)=\begin{cases}
      \displaystyle \frac{1}{x}\mod 1, & \mbox{ if $x\in (0,1)$}\\
      0 & \mbox{ if $x=0$}.
     \end{cases}
\end{equation}
The Gauss map is well known for its abundance of intriguing properties which touch upon many areas of both pure and applied realms \cite{corless,khinchin,lagarias}. Chief among its desirable properties, for our purposes, are its chaotic behavior \cite{corless} and ergodicity \cite{corless,khinchin}. Ergodic maps have the additional advantage that their long-term behavior is easier to ascertain due to the agreement of time and space averages \cite{behnia}. Although continued fractions have proven useful in generating pseudorandom sequences having low discrepancy \cite{shallit}, and others have made note of the seeming ``randomness'' of partial quotients in simple continued fraction representations \cite{dodge}, we are unaware of any direct attempt to utilize the Gauss map and its properties as a starting point for constructing a usable PRNG \cite{naik}.

We consider the following generalization of $G$: for each positive real number $r$, define the map $T_{r}:[0,1)\rightarrow [0,1)$ by $T_{r}(x)=G(x/r)$. This map is known as the {\it $r$-continued-fraction map} \cite{mehmetaj}, and we shall refer to it as the {\it $r$-CF} map.  The $r$-CF map may be used to produce an $r$-CF expansion of a real number $x$, by which we mean an expression of the form
\begin{equation}\label{rcf_expansion}
x=a_0+\frac{r}{\displaystyle a_1+\frac{r}{\displaystyle a_2+\frac{r}{\ddots}}},
\end{equation}
where $a_0$ can be any integer, and each $a_t$ for $t \geq 1$ is a natural number. A number's $r$-CF expansion is not necessarily unique \cite{greene,mehmetaj}. However, the ``canonical''\cite{greene} method for finding an $r$-CF expansion is analogous to how $G$ is used to find a simple continued fraction expansion: set $x_0=x$, and then generate the sequence $x_{t}=T_r(x_{t-1})$. The partial quotients are then given as $a_0=\lfloor x_0 \rfloor$ and $a_t=\lfloor \frac{r}{x_{t-1}} \rfloor$ for $t \geq 1$. If $x_{t-1}=0$, the process terminates, resulting in a finite expansion. The above $r$-CF expansion is referred to as the {\it maximal} expansion \cite{greene}.
In line with other efforts \cite{anselm,burger,dajani}, an $r$-CF expansion, as displayed in Equation (\ref{rcf_expansion}), will be denoted as $[a_0,a_1,a_2,\ldots]_r$. Greene and Schmieg \cite{greene}, as well as Mehmetaj \cite{mehmetaj}, show that if $r$ is a real number greater than $1$, then every real number $x$ has a valid $r$-CF expansion. Moreover, for any integer $a_0$, and any sequence of positive integers $a_1$, $a_2$,$\ldots$, the limit $\lim_{t\rightarrow \infty}[a_0,a_1,a_2,\ldots,a_t]_r$ exists and is a real number \cite{greene, mehmetaj}. The reader will note that the above is a generalization of $N$-continued fractions, studied by Burger \cite{burger} and others \cite{anselm,dajani,dajani_2,jonge}, which are defined similarly, only that $N$ is a natural number. It is again worth noting that while every real number has at most two simple continued fraction expansions, this is not necessarily the case for $N$-CF and $r$-CF expansions. In fact, any irrational $x>0$ has infinitely many $N$-continued fraction expansions for any fixed $N \in \mathbb{N}\backslash \{1\}$ \cite{anselm,dajani}. Greene and Schmieg \cite{greene} prove the analogous result for $r \geq 2$, while showing that an $r$-CF expansion is unique when $0 < r < 1$. We note here that the maximal $N$-CF expansion is sometimes referred to as the {\it greedy} expansion \cite{dajani}. Finally, for the case of $N$-continued fractions, some classical properties hold. For instance, it is still the case that every quadratic irrational has an eventually periodic expansion \cite{anselm,greene}. On the other hand, these properties can break down when considering the $r$-CF expansion, even when $r$ is rational \cite{greene}.

Here, we present a simple method for producing what appears to be quality pseudorandom output which utilizes the $r$-CF map. Section \ref{rcf} summarizes features of the $r$-CF map which are desirable for producing pseudorandom output. In Section \ref{rcfprng}, we present a simple algorithm for generating pseudorandom output which utilizes these features. In Section \ref{implementation}, we detail how we implemented the algorithm described in Section \ref{rcfprng}. In Section \ref{results_and_discussion}, we present and discuss the results of our simulations, which, among other things, assesses the statistical quality of the output. The forward- and reverse-bit output routinely passes all tests in the Dieharder \cite{brown} suite, the PractRand \cite{practrand} suite (up to $8$ terabytes of output with further testing required), as well as all tests in the BigCrush battery, which is included the TestU01 suite \cite{testu01}. In Section \ref{future_directions}, we summarize our efforts, discuss the limitations of the methods proposed here, outline possible future efforts for addressing and improving upon these shortcomings, and present possible variations on the ideas presented in the body of the paper.

\section{Key Features of the $r$-CF Map}\label{rcf}

In this section, we highlight the most important features of the $r$-CF map for our purposes.

\subsection{Ergodicity}\label{rcf_ergodic}

\begin{definition}
For each real number $r \geq 1$, define the {\it $r$-Gauss-Kuzmin} measure $\mu_{r}$ by
\[
\mu_{r}(B)=\frac{1}{\log\left(1+\frac{1}{r}\right)}\int_B \frac{\textup{d}\lambda}{r+x},
\]
where $\lambda$ is the Lebesgue measure.
\end{definition}

Clearly, the above generalizes the Gauss-Kuzmin measure. The following is a result by Dajani, Kraaikamp, and Wekken \cite{dajani}.

\begin{theorem}
When $r$ is a natural number, the measure $\mu_{r}$ is $T_{r}$-invariant and ergodic.
\end{theorem}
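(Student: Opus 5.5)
The plan is to verify invariance by a direct transfer-operator (Perron--Frobenius) computation, and to get ergodicity from the standard Rényi-type argument for full-branched expanding maps, concluded with Knopp's lemma. Throughout, $r=N\in\mathbb{N}$, and $T_N(x)=N/x-\lfloor N/x\rfloor$ for $x\in(0,1)$. Because $x<1$ forces $N/x>N$, every branch index $k=\lfloor N/x\rfloor$ satisfies $k\geq N$. The $k$-th cylinder is $\Delta_k=\bigl(N/(k+1),\,N/k\bigr]$, and on it $T_N(x)=N/x-k$ maps $\Delta_k$ onto $[0,1)$ monotonically, up to endpoints.

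\textbf{Invariance.} I will fix $y\in(0,1)$ and identify its preimages: they are $x_k=N/(k+y)$ for $k\geq N$, with $|dx_k/dy|=N/(k+y)^2$. The density $h(x)=1/(N+x)$ is invariant if and only if $\sum_{k\geq N} h(x_k)\,|dx_k/dy| = h(y)$. For each $k$,
\[
\frac{1}{N+\frac{N}{k+y}}\cdot\frac{N}{(k+y)^2}=\frac{1}{(k+y)(k+y+1)}=\frac{1}{k+y}-\frac{1}{k+y+1}.
\]
Summing over $k\geq N$ telescopes to $1/(N+y)$, as required. Equivalently, one can check $\mu_N(T_N^{-1}[0,y))=\mu_N([0,y))$ on intervals, which form a generating $\pi$-system. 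The normalising constant $\log(1+1/N)$ plays no role here.

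\textbf{Ergodicity.} Since $\mu_N$ and Lebesgue measure $\lambda$ are mutually absolutely continuous, with density bounded between $1/(N+1)$ and $1/N$, it suffices to show that any $T_N$-invariant Borel set $A$ with $\lambda(A)>0$ has $\lambda(A)=1$. I will use the cylinders $\Delta(a_1,\dots,a_n)$ of rank $n$. The inverse branch $\psi$ of $T_N^n$ on such a cylinder is a Möbius map, $\psi(y)=(p_n+p_{n-1}y)/(q_n+q_{n-1}y)$, with recurrences $q_n=a_nq_{n-1}+Nq_{n-2}$ and similarly for $p_n$, exactly as in the classical case. This gives $|\psi'(y)|=N^n/(q_n+q_{n-1}y)^2$. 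The ratio $\sup|\psi'|/\inf|\psi'|$ is then at most $\bigl((q_n+q_{n-1})/q_n\bigr)^2\leq 4$, a uniform bounded-distortion constant. Together with $T_N^{-n}A=A$, this gives
\[
\lambda\bigl(A\cap\Delta(a_1,\dots,a_n)\bigr)=\int_A|\psi'|\,d\lambda\geq \tfrac14\,\lambda(A)\,\lambda\bigl(\Delta(a_1,\dots,a_n)\bigr).
\]

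\textbf{Main obstacle.} The delicate step is showing that the cylinders generate the Borel $\sigma$-algebra, i.e.\ that their diameters tend to $0$ uniformly. For $N\geq2$ this is immediate, because $|T_N'(x)|=N/x^2\geq N>1$. For $N=1$ (the Gauss map) one instead uses $q_n\geq F_n$ (Fibonacci), or equivalently that $T_1^2$ is uniformly expanding. In general $q_n$ grows at least like the solution of $q_n=Nq_{n-1}+Nq_{n-2}$, so the diameters, which are $N^n/(q_n(q_n+q_{n-1}))$, tend to $0$ geometrically. Once this is in hand, the cylinders form a generating family of intervals with $\lambda(A\cap \Delta)\geq \tfrac14\lambda(A)\lambda(\Delta)$. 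Applied to $A^c$, which is also invariant, Knopp's lemma forces $\lambda(A^c)=0$, so $\lambda(A)=1$ and $\mu_N$ is ergodic.
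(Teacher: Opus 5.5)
Your proof is correct. The paper gives no proof of this statement; it quotes it from Dajani, Kraaikamp, and van der Wekken. So your argument (transfer-operator check of invariance, then bounded distortion on Möbius inverse branches and Knopp's lemma) is a self-contained alternative to a citation, not a rival to an in-paper argument.

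What your route buys is that it shows exactly where $r\in\mathbb{N}$ is used. Every branch $\Delta_k$, $k\geq N$, is full. Hence the preimage sum for $h(x)=1/(N+x)$ starts at $k=N$ for every $y$ and telescopes exactly to $1/(N+y)$. Fullness also means $T_N^n$ maps each rank-$n$ cylinder onto $[0,1)$, which your estimate $\lambda(A\cap\Delta)\geq\frac14\lambda(A)\lambda(\Delta)$ and Knopp's lemma require.

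For non-integer $r$ the branch $k=\lfloor r\rfloor$ only covers $(r-\lfloor r\rfloor,1)$. The same computation then gives $1/(\lfloor r\rfloor+1+y)$ for $y<r-\lfloor r\rfloor$ and $1/(\lfloor r\rfloor+y)$ for $y>r-\lfloor r\rfloor$, neither equal to $1/(r+y)$. This is the failure of invariance the paper computes right after the theorem, and it is why Mehmetaj's result needs different tools.

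Three minor points:
\begin{itemize}
\item The step you call the main obstacle is routine. Since $a_n\geq N$, you have $q_n\geq Nq_{n-1}+Nq_{n-2}$, so the mesh decays geometrically, as you say.
\item Knopp's lemma needs every open interval to be, mod $0$, a countable disjoint union of cylinders, not merely that cylinders generate the Borel sets. This follows from the nested structure of the cylinders together with vanishing mesh.
\item Your closing sentence is roundabout. Knopp's lemma concludes full measure, so apply it directly to $A$ with $\gamma=\lambda(A)/4$ to get $\lambda(A)=1$.
\end{itemize}
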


On the other hand, for non-integer values of $r$, the $r$-CF map no longer preserves $\mu_r$. Indeed, for $0\leq a <b<1$,
\begin{align*}
  \mu_{r}(T_{r}^{-1}[a,b])&=\mu_{r}\left(\bigcup_{k=\lceil r-a \rceil}^{\infty}\left[\frac{r}{k+b},\frac{r}{k+a}\right]\right)\\
 &=\frac{1}{\log\left(1+\frac{1}{r}\right)}\sum_{k=\lceil r-a \rceil}^{\infty}\int_{{r}/(k+b)}^{{r}/(k+a)} \frac{\textup{d}x}{r+x}\\
 &=\frac{\log(\lceil r-a \rceil+b)-\log(\lceil r-a \rceil+a)}{\log\left(1+\frac{1}{r}\right)}\\
 & \neq \frac{\log(r+b)-\log(r+a)}{\log\left(1+\frac{1}{r}\right)}\\
 &=\mu_{r}([a,b]).
 \end{align*}

This is, perhaps, not surprising considering that some classical properties of continued fraction expansions fail to hold when $r$ is no longer a natural number. However, we do see that for sufficiently large values of $r$, the approximation $\mu_{r}([a,b])\approx \mu_{r}(T_{r}^{-1}[a,b])$ is valid. In fact, we can say a little more.

\begin{corollary}\label{approx_lebesgue}
For any Borel set $B$ of $[0,1)$, we have
$
\lim_{r \rightarrow \infty} \mu_{r}(B)=\lim_{r \rightarrow \infty}\/\mu_{r} \left(T_{r}^{-1}B\right)=\lambda(B).
$
\end{corollary}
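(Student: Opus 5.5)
We prove the two limits separately, first checking the claim on intervals and then passing to arbitrary Borel sets by a uniform density bound.

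\textbf{First limit.} Write $\mu_r(B)=\int_B f_r\,\textup{d}\lambda$ with density
\[
f_r(x)=\frac{1}{(r+x)\log\left(1+\frac{1}{r}\right)}.
\]
Since $\log(1+1/r)=\frac1r-\frac{1}{2r^2}+O(r^{-3})$, we have $r\log(1+1/r)\to 1$. Moreover, for $x\in[0,1)$ we have $r/(r+x)\in(r/(r+1),1]$. Hence
\[
\sup_{x\in[0,1)}|f_r(x)-1|\to 0 \quad\text{as } r\to\infty.
\]
Consequently $|\mu_r(B)-\lambda(B)|\le \sup|f_r-1|\cdot\lambda(B)\to 0$, uniformly over all Borel $B\subseteq[0,1)$.

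\textbf{Second limit, on intervals.} By the computation preceding the corollary, with $k_0=\lceil r-a\rceil$,
\[
\mu_r(T_r^{-1}[a,b])=\frac{\log(k_0+b)-\log(k_0+a)}{\log(1+1/r)}.
\]
Since $k_0\ge r-a$ and $k_0\to\infty$, the mean value theorem gives $\log(k_0+b)-\log(k_0+a)=(b-a)/\xi$ for some $\xi\in[k_0+a,k_0+b]$. Because $k_0\in[r-a,r-a+1)$, we have $\xi/r\to1$, uniformly in $0\le a<b<1$. Dividing by $\log(1+1/r)\sim 1/r$ shows that the ratio tends to $b-a=\lambda([a,b])$.

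\textbf{Second limit, general Borel sets.} The cleanest route avoids approximating $B$ by intervals, which would require uniform control. Instead we show that the pushforward of $\mu_r$ under $T_r$ has density uniformly close to $1$. We compute $\mu_r(T_r^{-1}B)$ directly. The preimage $T_r^{-1}B$ is the union over $k\ge1$ of $\{r/(k+y): y\in B,\ r/(k+y)<1\}$, that is, over $k+y>r$. Changing variables $x=r/(k+y)$, so $\textup{d}x=r\,\textup{d}y/(k+y)^2$, gives
\[
\mu_r(T_r^{-1}B)=\int_B g_r(y)\,\textup{d}y,\qquad g_r(y)=\frac{1}{\log(1+1/r)}\sum_{k>r-y}\frac{r}{(k+y)(k+y+r)}.
\]
The summand equals $\frac{1}{k+y}-\frac{1}{k+y+r}$. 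For large $r$, the sum $\sum_{k>r-y}\frac{r}{(k+y)(k+y+r)}$ is comparable to the integral $\int_{r}^{\infty}\frac{r\,\textup{d}t}{t(t+r)}=\log 2$, but that gives the wrong constant. We therefore redo the estimate more carefully: the terms are of size $r/(k(k+r))$, and summing from $k\approx r$ gives
\[
\sum_{k\ge r}\left(\frac1k-\frac1{k+r}\right)\approx \sum_{k=r}^{2r}\frac1k\approx\log 2.
\]
This contradicts the interval computation, which gave $\log(k_0+b)-\log(k_0+a)$ as if only $k=k_0$ contributed. So the intended computation in the paper is a telescoping over $k$ of $\log\frac{k+b}{k+a}$. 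The correct plan is to bound
\[
g_r(y)=\frac{1}{\log(1+1/r)}\sum_{k\ge k_0(y)}\frac{r}{(k+y)(k+y+r)}
\]
directly and compare it with $1$. This density estimate, namely whether $g_r\to1$ uniformly on $[0,1)$, is the main obstacle, and I expect it may fail. In that case the proof should follow the paper's interval formula: establish the limit on intervals uniformly, then extend to finite unions of intervals and to Borel sets. The extension uses the fact that both $\mu_r\circ T_r^{-1}$ and $\lambda$ are measures on $[0,1)$ whose total masses converge. Convergence on intervals with convergent total mass gives weak convergence, and with a uniform bound on the densities, obtained from the interval estimate applied to small intervals, this upgrades to convergence on every Borel set via outer regularity.
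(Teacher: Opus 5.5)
The genuine gap is in your second limit for Borel sets. It comes from an algebra slip in the change of variables, which led you to abandon what is actually the right proof. Your first limit is correct, and it is cleaner than the paper's proof: the paper bounds $\mu_r([a,b])$ between $L(r)$ and $U(r)$ (essentially your mean value step) and simply asserts that intervals suffice.

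With $x=r/(k+y)$ one has $r+x=r(k+y+1)/(k+y)$, so
\[
\frac{\mathrm{d}x}{r+x}=\frac{r}{(k+y)^2}\cdot\frac{k+y}{r(k+y+1)}\,\mathrm{d}y=\frac{\mathrm{d}y}{(k+y)(k+y+1)},
\]
not $r\,\mathrm{d}y/\bigl((k+y)(k+y+r)\bigr)$. You effectively used the density $1/(1+x)$ instead of $1/(r+x)$. The correct summand $\frac{1}{k+y}-\frac{1}{k+y+1}$ telescopes, giving
\[
g_r(y)=\frac{1}{\bigl(k_0(y)+y\bigr)\log\left(1+\frac1r\right)},\qquad k_0(y)=\lfloor r-y\rfloor+1 .
\]
Since $k_0(y)+y\in(r,r+1]$, you get $\frac{1}{(r+1)\log(1+1/r)}\le g_r(y)<\frac{1}{r\log(1+1/r)}$. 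Hence $g_r\to 1$ uniformly, and $|\mu_r(T_r^{-1}B)-\lambda(B)|\le\sup|g_r-1|\to 0$ for every Borel $B$, exactly parallel to your first limit. For integer $r$ this gives $g_r=f_r$, which is the invariance. Your version has total mass about $r\log 2$, which should have flagged the error, since $\mu_r\circ T_r^{-1}$ is a probability measure. So the ``contradiction'' you found is spurious. The paper's interval formula is not ``as if only $k=k_0$ contributed''; it is the integral over $[a,b]$ of this telescoped sum.

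This also affects your interval step, which you took from the paper. The identity $\mu_r(T_r^{-1}[a,b])=\frac{\log(k_0+b)-\log(k_0+a)}{\log(1+1/r)}$ with $k_0=\lceil r-a\rceil$ holds only when $k_0(y)$ is constant on $[a,b]$. Suppose $r\notin\mathbb{Z}$ and $a<\theta<b$, where $\theta=r-\lfloor r\rfloor$. Then the branch $k=\lfloor r\rfloor$ contributes $\{r/(\lfloor r\rfloor+y):\theta<y\le b\}$, which the union over $k\ge\lceil r-a\rceil$ omits. For example, for $[a,b]=[0,1)$ the formula gives total mass strictly less than $1$. The omitted piece has $\mu_r$-mass $O(1/r)$, so the limit survives, but your mean value argument rests on an identity that is false in this case. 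The density $g_r$ avoids the issue entirely.

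Your fallback could be made rigorous: take uniform relative bounds $|\nu(I)-\lambda(I)|\le\epsilon_r\lambda(I)$ on intervals, extend them to open sets by countable additivity, then to Borel sets by outer regularity of $\lambda$. Weak convergence is not needed. As written, however, it is a conditional sketch rather than a proof.
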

\begin{proof}
It suffices to show that the result holds on any subinterval of $[0,1)$. Let $L(r) =\frac{\log(r+1+b)-\log(r+1+a)}{\log\left(1+\frac{1}{r}\right)}$ and $U(r)=\frac{\log(r-1+b)-\log(r-1+a)}{\log\left(1+\frac{1}{r}\right)}$. With the previous calculation in mind, we observe that $L(r) < \mu_{r}([a,b]) < U(r)$. Since $r-1 < \lceil r-a \rceil < r+1$, we also have $L(r) < \mu_{r}(T_{r}^{-1}[a,b]) < U(r)$. It then follows that $\lim_{r \rightarrow \infty} \mu_{r}([a,b])=\lim_{r \rightarrow \infty} \mu_{r}(T_{r}^{-1}[a,b])=b-a$.
\end{proof}

Applying Birkhoff's ergodic theorem, together with Corollary \ref{approx_lebesgue}, gives us that sequences $\{x_t\}$ defined by $x_{t+1}=T_r(x_t)=G(x_t/r)$, where $r$ is a sufficiently large natural number and $x_0$ is chosen randomly from $[0,1]$, are approximately uniformly distributed modulo $1$.

As for the ergodicity of the map for non-integer values of $r$, the following is a theorem of Mehmetaj \cite{mehmetaj}.

\begin{theorem}\label{mehmetaj}
Suppose $\triangle(d)=\left(\frac{r}{\lfloor r \rfloor+1},1\right)$ when $d=\lfloor r \rfloor$, and $\triangle(d)=\left(\frac{r}{d+1},\frac{r}{d}\right)$ when $d$ is a natural number. Let $\{\triangle(d) \subset [0,1) \mid d \geq \lfloor r \rfloor \}$ be a countable partition of $[0,1)$. Then $T_r$ admits an ergodic, absolutely continuous, invariant measure equivalent to the Lebesgue measure.
\end{theorem}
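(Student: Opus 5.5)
The statement concerns a countably piecewise expanding interval map, so I will prove it with the Adler/Rényi folklore theorem (equivalently the Lasota--Yorke / Schweiger machinery for ``fibred systems''). First I fix the partition. For $d\geq \lfloor r\rfloor+1$ the branch of $T_r$ on $\triangle(d)=\left(\frac{r}{d+1},\frac{r}{d}\right)$ is $x\mapsto r/x-d$. It maps $\triangle(d)$ diffeomorphically and increasingly onto $(0,1)$, so all these branches are \emph{full}. The exceptional cell $\triangle(\lfloor r\rfloor)=\left(\frac{r}{\lfloor r\rfloor+1},1\right)$ carries the branch $x\mapsto r/x-\lfloor r\rfloor$. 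Its image is $(r-\lfloor r\rfloor,1)$, which is a proper subinterval when $r\notin\mathbb{N}$. So $T_r$ is a countable-branch Markov-type map with finitely many (in fact one) non-full branches. I will therefore verify the hypotheses of a version of the folklore theorem that tolerates finite-range structure. Examples are the Rychlik or Zweimüller framework for ``AFU maps'', or an induced-map argument.

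The first hypothesis is \textbf{uniform expansion}. We have $|T_r'(x)|=r/x^2\geq r>1$ whenever $r>1$, since $x<1$. For $r\le 1$ I would instead pass to $T_r^2$. The second hypothesis is \textbf{bounded distortion (Adler's condition)}: $|T_r''|/(T_r')^2=(2r/x^3)/(r^2/x^4)=2x/r\le 2/r$, which is bounded uniformly over all branches. The third hypothesis is the \textbf{finite image condition}: the images of the branches are only the two intervals $(0,1)$ and $(r-\lfloor r\rfloor,1)$. The Zweimüller theorem for AFU maps (Adler condition, finite images, uniform expansion) then gives an absolutely continuous invariant probability measure $\mu$. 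Its density $h$ has bounded variation and is bounded above, and there are finitely many ergodic components, each supported on a finite union of intervals.

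It remains to upgrade this to ergodicity and equivalence with $\lambda$. My plan uses the full branches. Let $A$ be the support of an ergodic component. It contains an interval $J$, and by expansion some iterate $T_r^n J$ eventually contains a whole cell $\triangle(d)$ with $d>\lfloor r\rfloor$. This is because an interval that keeps avoiding the endpoints $r/d$ grows by a factor of at least $r$ at each step, so it cannot do so indefinitely. Then $T_r^{n+1}J\supseteq(0,1)$ up to null sets. Invariance of $A$ forces $A=[0,1)$ mod $\lambda$, so there is exactly one component, and $\mu$ is ergodic. The same covering argument gives $h>0$ a.e.: the set $\{h>0\}$ is forward invariant mod null sets and contains an interval, hence is everything. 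Combined with $h$ bounded, this gives $\mu\sim\lambda$.

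The main obstacle is the bookkeeping around the non-full branch. I must make sure the growth argument works: one needs $r>1$ (or an iterate) so that expansion beats the fragmentation at the countably many cut points $r/d$ accumulating at $0$. I also need the finite-image version of the folklore theorem to apply as stated. If that is awkward, I would induce on the full-branch region $\bigcup_{d>\lfloor r\rfloor}\triangle(d)=(0,\tfrac{r}{\lfloor r\rfloor+1})$. The first-return map is then a full-branched Gibbs--Markov map, so Adler's theorem applies directly. Since $T_r$ reaches the region in a bounded expected number of steps, the measure then lifts by Kac's formula.
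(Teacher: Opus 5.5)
The paper gives no proof of this theorem; it is quoted from Mehmetaj. Your proposal is therefore an independent argument rather than a reconstruction, and for $r>1$ its main line is sound. Your checks are correct:
\begin{itemize}
\item $|T_r'(x)|=r/x^2\ge r>1$;
\item $|T_r''|/(T_r')^2=2x/r\le 2/r$, uniformly over all branches;
\item the only branch images are $(0,1)$ and $(r-\lfloor r\rfloor,1)$.
\end{itemize}
So Zweim\"uller's AFU theorem applies. Rychlik's theorem does too, since $1/|T_r'|=x^2/r$ has finite total variation over the cells and supremum $1/r<1$. Either gives finitely many ergodic acips with bounded densities of bounded variation, each supported on a finite union of intervals. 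Your covering argument then collapses these to a single acip with a.e.\ positive density. Compared with the bare citation, this buys a self-contained proof, uniqueness of the acip, and regularity of its density.

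The covering step needs one more line. When $T_r^nJ$ first contains a cut point $r/d$ in its interior, you do not yet have a whole cell. However, the cell immediately to the left of $r/d$ is always the full cell $\triangle(d)$ with $d\ge\lfloor r\rfloor+1$, and its branch sends $(r/d)^-$ to $0^+$. Hence $T_r^{n+1}J\supseteq(0,\delta)$ for some $\delta>0$. This interval contains infinitely many full cells, so $T_r^{n+2}J\supseteq(0,1)$.

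Two side remarks in your proposal are wrong, although neither is needed.

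\textbf{The case $r<1$.} Passing to $T_r^2$ does not handle it. There $\triangle(0)=(r,1)$ carries the branch $x\mapsto r/x$, which is an involution. (With the paper's $G$ on $[0,1)$, $T_r$ is not even defined on $[r,1)$.) So $T_r^2$ is the identity on $(r,1)$. Moreover, $(r,1)$ is forward invariant while $T_r^{-1}(r,1)$ is strictly larger by a set of positive Lebesgue measure. Hence no invariant measure can be equivalent to $\lambda$. The statement must be read with $r\ge 1$, and your $T_r^2$ remark is valid only for $r=1$, the Gauss map.

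\textbf{The fallback induced map.} It is not full-branched when $r\notin\mathbb{N}$. An excursion from $Y=(0,r/(\lfloor r\rfloor+1))$ enters the exceptional cell, and that cell's image $(r-\lfloor r\rfloor,1)$ contains the cell itself. Consequently, every return branch whose excursion passes through it lands only on $(r-\lfloor r\rfloor,\,r/(\lfloor r\rfloor+1))\subsetneq Y$. Since this left endpoint is in general not a partition point, the first-return map need not even be Markov. It is again a finite-image map, so the fallback does not avoid the AFU/Rychlik machinery.
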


\subsection{Chaotic Behavior}\label{rcf_chaos}

Neither Mehmetaj \cite{mehmetaj} nor Greene and Schmieg \cite{greene} consider possible chaotic features of the $r$-CF map. In this section we present numerical evidence of an everywhere-positive Lyapunov exponent, $\lambda$, for $r \geq 1$. Using the well-known definition for the case of a discrete map,
\[
\lambda=\lim_{N \rightarrow \infty} \frac{1}{N}\sum_{t=0}^{N-1} \log |T_r'(x_t)|,
\]
we estimated $\lambda$ for a pseudorandomly chosen value of $r$ from a specified interval by computing the orbits of a pseudorandomly chosen point $x_0$ drawn from $(0,1)$, and then estimating $\lambda$ from the orbit, taking $N=10^5$.

\begin{figure*}
\centering
\begin{tabular}{cc}
 \includegraphics[scale=0.6]{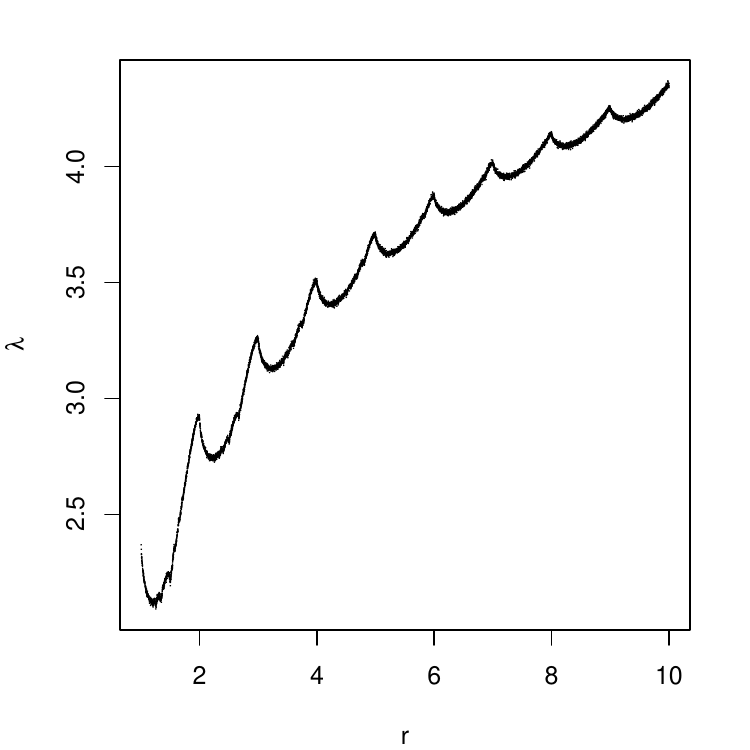} & \includegraphics[scale=0.6]{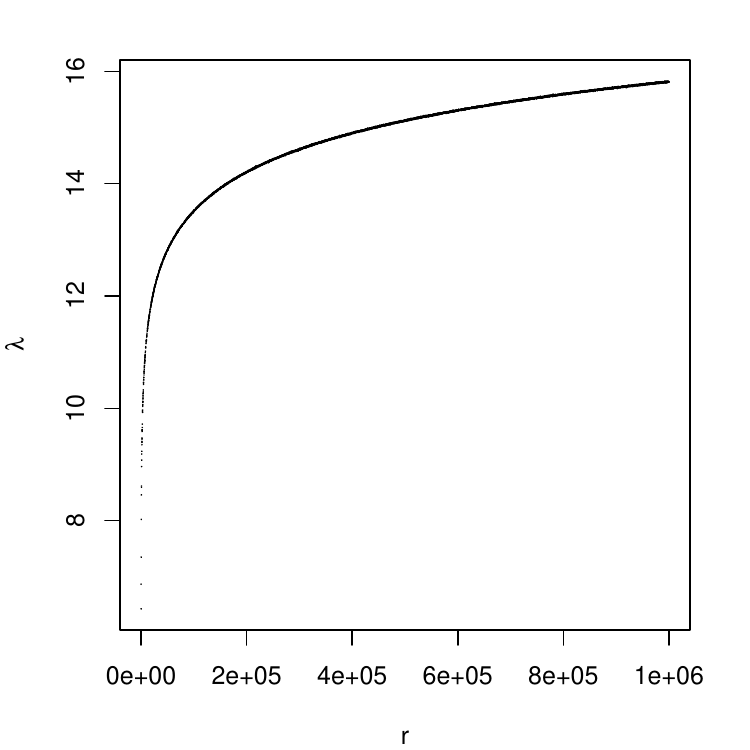} \\
 (a) & (b)
\end{tabular}
\caption[figure]{The estimated Lyapunov exponent, $\lambda$, of $T_r$ as $r$ varies. Each graph was generated by pseudorandomly sampling $10000$ values of $r$ from the intervals $[1,10]$ (a) and $[1,10^6]$ (b), and then for each value of $r$, estimating $\lambda$ from an orbit of a pseudorandomly chosen point in $(0,1)$.}\label{lyap}
\end{figure*}

Figure \ref{lyap} suggests, in a global sense, that the $r$-CF map becomes more chaotic as $r$ increases. Also, given the logarithmic shape of $\lambda$ versus $r$, and given the numerical evidence, Figure \ref{lyap} suggests that $|2+\log(r)-\lambda| \rightarrow 0$ as $r \rightarrow \infty$. Later, in Section \ref{future_directions}, we will formally state a conjecture for a two-parameter generalization of the $r$-CF map. In this way, we take this as evidence that the $r$-CF map is sensitive to initial conditions. The figure also suggests that the curve exhibits finer-scale features which might be fractal in nature. Figure \ref{lyap_alpha}(a) in Section \ref{future_directions} also supports this suspicion. Finally, in a more local sense, it appears that the map is maximally chaotic near integer values as indicated by the cusp-like features of the curve.

With regard to the above, it must be emphasized that the finite-precision arithmetic may change the simulated dynamics of the $r$-CF map, that is, we cannot be entirely certain that simulated orbits of $T_r$ bear any resemblance to those of the exact map \cite{corless}. We shall say more about this in Section \ref{future_directions}.

Beyond evidence of an almost-everywhere positive Lyapunov exponent, the $r$-CF map also plainly exhibits other classical chaotic features. Consider a real number in $(0,1)$ with $r$-CF expansion $[0,a_1,a_2,\ldots]_r$, where $a_t \geq r$ for all $t \geq 1$. It is not difficult to see that $T_r([0,a_1,a_2,\ldots]_r)=[0,a_2,a_3,\ldots]_r$, which is to say that the $r$-CF map acts as a shift map on certain types of $r$-CF expansions. Thus, by virtue of these representations, we may manufacture periodic points of any period, of which, there are countably many. On the other hand, by this same reasoning, there are uncountably many aperiodic orbits.

Other properties to consider are mixing properties. The Gauss map is known to be exponentially mixing \cite{policott}. That said, there seems to be little else known about mixing properties of the $r$-CF map. Given the success of the methods presented here, we suspect that the $r$-CF map inherits some degree of mixing from the Gauss map. This is one area for further investigation.

\subsection{Probabilistic Considerations}\label{rcf_prob_stat}

The following is another suggestive observation in line with the above which helps to inform the reasoning behind the methods presented in Section \ref{rcfprng}.

\begin{theorem}\label{approx_uniform}
 Let $X$ be a uniform random variable on $[0,1]$, and let $r>1$ be a real number. Then, the random variable $T_{r}(X)$ converges in distribution to a uniform random variable on $[0,1]$ as $r \rightarrow \infty$.
\end{theorem}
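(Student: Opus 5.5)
The plan is to compute the distribution function of $T_r(X)$ explicitly and show it converges pointwise to $y \mapsto y$ on $[0,1]$. Fix $y \in [0,1)$ and set $F_r(y)=P(T_r(X)\le y)=\lambda(T_r^{-1}[0,y])$. This set differs from $T_r^{-1}[0,y)$ only by a countable set, so it has the same Lebesgue measure. The preimage calculation preceding Corollary \ref{approx_lebesgue} applies with $a=0$, $b=y$, but now using Lebesgue measure instead of $\mu_r$. One has $T_r(x)\in[0,y]$ exactly when $r/x \in [k,k+y]$ for some integer $k$. Since $x<1$ forces $r/x>r$, the relevant $k$ run over $k\ge \lceil r\rceil$, together with possibly a truncated piece for $k=\lfloor r\rfloor$ when $r$ is not an integer. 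The points $x=0$ and the set where $X\ge 1$ both have measure zero and can be ignored.

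The main term is then
\[
S_r(y)=\sum_{k\ge \lceil r\rceil}\left(\frac{r}{k}-\frac{r}{k+y}\right)=r\sum_{k\ge\lceil r\rceil}\frac{y}{k(k+y)},
\]
plus a boundary interval, coming from $k=\lfloor r\rfloor$, of length at most $r/\lfloor r\rfloor - r/(\lfloor r\rfloor+y)\le r\,y/\lfloor r\rfloor^2=O(1/r)$.

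The key estimate is a comparison of $S_r(y)$ with an integral. Because $\frac{y}{k(k+y)}$ is decreasing in $k$, we get
\[
r\int_{\lceil r\rceil}^\infty \frac{y\,dt}{t(t+y)}\le S_r(y)\le r\int_{\lceil r\rceil-1}^\infty \frac{y\,dt}{t(t+y)}.
\]
The integral evaluates to $\log\bigl(1+y/m\bigr)$ at the lower limit $m$. Using $m\log(1+y/m)\to y$ as $m\to\infty$, together with $r/m\to1$ for $m=\lceil r\rceil$ or $\lceil r\rceil-1$, both bounds tend to $y$. This squeezing mirrors the $L(r)<\cdot<U(r)$ argument in the proof of Corollary \ref{approx_lebesgue}. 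Combining it with the $O(1/r)$ boundary term gives $F_r(y)\to y$ for every $y\in[0,1)$. Since $F_r(y)=1$ for $y\ge1$ and $0$ for $y<0$, this is convergence of the distribution functions at every point, and hence convergence in distribution to the uniform law.

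I expect the main obstacle to be the bookkeeping at the top branch when $r$ is not an integer. There the partition element $\triangle(\lfloor r\rfloor)=(r/(\lfloor r\rfloor+1),1)$ of Theorem \ref{mehmetaj} is not a full branch, so one has to check carefully that its contribution is bounded by $O(1/r)$ uniformly in $y$. Once this is settled the rest is a routine limit. As an alternative route, one could argue via Scheffé's lemma by showing the transfer-operator density $\sum_k r/(k+y)^2$ converges to $1$. I would only use this if the distribution-function approach became messy, since the CDF computation is more elementary.
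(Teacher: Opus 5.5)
Your proposal is correct and follows essentially the same route as the paper: write $P(T_r(X)\le y)$ as $y\sum_{k\ge m} r/(k(k+y))$ with $m=\lceil r\rceil$, then squeeze the sum. The paper squeezes it between the telescoping sums $\sum_{k\ge m} r/(k(k+1))=r/m$ and $\sum_{k\ge m} r/(k(k-1))=r/(m-1)$ instead of your integral bounds $r\log(1+y/m)$ and $r\log(1+y/(m-1))$, which spares it the limit $m\log(1+y/m)\to y$. Your explicit $O(1/r)$ bound on the incomplete branch at $k=\lfloor r\rfloor$ for non-integer $r$ covers a point the paper's proof silently skips (its index $\lceil r-a\rceil$ is a leftover from the earlier computation), so your version is, if anything, more careful.
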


\begin{proof}
We will show that $\displaystyle \lim_{{r} \rightarrow \infty}F_{T_{r}(X)}(x)=x$. Since
\[
\begin{array}{rl}
F_{T_{r}(X)}(x)
&\displaystyle =\mathbb{P}(T_{r}(X) \leq x)\\
&\displaystyle =\mathbb{P}\left(\frac{X}{r}\in \bigcup_{k=\lceil r-a \rceil}^{\infty}\left[\frac{1}{k+x},\frac{1}{k}\right]\right)\\
&\displaystyle =\mathbb{P}\left(X\in \bigcup_{k=\lceil r-a \rceil}^{\infty}\left[\frac{r}{k+x},\frac{r}{k}\right]\right)\\
&\displaystyle =\sum_{k=\lceil r-a \rceil}^{\infty}\left(\frac{r}{k}-\frac{r}{k+x}\right)\\
&\displaystyle=x\sum_{k=\lceil r-a \rceil}^{\infty}\frac{r}{k(k+x)},\\
\end{array}
\]
it only remains to be shown that $\displaystyle \lim_{r \rightarrow \infty}\sum_{k=\lceil r-a \rceil}^{\infty}\frac{r}{k(k+x)}=1$. Comparing series, we see that
\begin{align*}
\frac{r}{\lceil r-a \rceil}
&=\sum_{k=\lceil r-a \rceil}^{\infty}\frac{r}{k(k+1)}\\
&<\sum_{k=\lceil r-a \rceil}^{\infty}\frac{r}{k(k+x)}\\
&<\sum_{k=\lceil r-a \rceil}^{\infty}\frac{r}{k(k-1)}\\
&=\frac{r}{\lceil r-a \rceil-1},
\end{align*}
which establishes the result.
\end{proof}

The following is an immediate corollary.

\begin{corollary}\label{iterates}
 Let $X$ be a uniform random variable on $[0,1]$,  and let $r>1$ be a real number, and let $t$ be a natural number. Then, the random variable $T_{r}^t(X)$ converges in distribution to a uniform random variable on $[0,1]$ as $r \rightarrow \infty$.
\end{corollary}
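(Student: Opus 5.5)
The plan is to prove the statement by induction on $t$, with Theorem \ref{approx_uniform} as the base case $t=1$. The naive induction does not go through, because convergence in distribution of $T_r^{t-1}(X)$ to a uniform does not by itself control $T_r^{t}(X)=T_r(T_r^{t-1}(X))$: the map $T_r$ is discontinuous and depends on $r$. So the key is a \emph{quantitative, uniform} version of Theorem \ref{approx_uniform} that allows an input distribution that is only approximately uniform.

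The first step is to extract such a bound from the proof of Theorem \ref{approx_uniform}. For a random variable $Y$ on $[0,1)$ with distribution function $F_Y$, write $\|F_Y-\mathrm{id}\|_\infty=\sup_x|F_Y(x)-x|$. The preimage identity used there gives
\[
\mathbb{P}(T_r(Y)\le x)=\sum_{k\ge \lceil r\rceil}\Bigl(F_Y\bigl(\tfrac{r}{k}\bigr)-F_Y\bigl(\tfrac{r}{k+x}\bigr)\Bigr)
\]
(up to boundary terms near $k=\lceil r\rceil$). Applying this with $F_Y$ in place of the identity would naively cost $2\|F_Y-\mathrm{id}\|_\infty$ per term, summed over infinitely many $k$, which is useless.

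The main obstacle is therefore controlling these tails. I would truncate the sum at $k\le Kr$. The tail $k>Kr$ corresponds to $Y<1/K$, so its contribution is at most $\mathbb{P}(Y<1/K)\le 1/K+\|F_Y-\mathrm{id}\|_\infty$. The head contains roughly $(K-1)r$ terms, and bounding each by $2\|F_Y-\mathrm{id}\|_\infty$ yields an error that grows with $r$. This is bad, since $r\to\infty$ simultaneously.

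To avoid this, I would instead use a density bound. By induction, $T_r^{t}(X)$ has a density, because $T_r$ is piecewise monotone with derivative of modulus $r/x^2\ge r$. I would aim to show that this density converges to $1$ uniformly on compact subsets of $(0,1]$, or at least in $L^1$. The transfer operator is
\[
(\mathcal{L}f)(x)=\sum_{k\ge\lceil r\rceil}\frac{r}{(k+x)^2}\,f\Bigl(\frac{r}{k+x}\Bigr),
\]
plus the boundary branch at $k=\lfloor r\rfloor$, which has length at most about $1/r$. If $f$ is close to $1$ in sup norm on $[\delta,1]$ and $f$ is integrable, then $\mathcal{L}f$ is close to $\sum_k r/(k+x)^2$. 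By a Riemann-sum comparison this sum is $1+O(1/r)$, exactly as in the series comparison at the end of the proof of Theorem \ref{approx_uniform}. The branches with $r/(k+x)<\delta$ carry total weight $\int_0^{\delta}f$, which is small.

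The inductive claim is then that the density $f_t$ of $T_r^t(X)$ satisfies $\sup_{[\delta,1]}|f_t-1|\to0$ as $r\to\infty$ for each fixed $\delta$, together with $\int_0^\delta f_t\le\delta+o(1)$. This propagates through one application of $\mathcal{L}$. The base case is $f_0\equiv 1$. Convergence of distribution functions then follows by integrating $f_t$, which proves the corollary.
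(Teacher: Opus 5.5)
Your diagnosis of the naive induction is correct. It matters here because the paper supplies no argument: it calls the statement an immediate corollary of Theorem~\ref{approx_uniform}. That theorem only controls distribution functions, and closeness of distribution functions cannot be pushed through $T_r$. Redistributing mass inside each branch interval $(r/(k+1),r/k]$, whose length is $O(1/r)$, changes $F_Y$ by only $O(1/r)$ while making the law of $T_r(Y)$ essentially arbitrary. So passing to densities and the transfer operator is the right move.

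\textbf{The gap.} The inductive step as you state it does not close. Your hypothesis bounds $f_t$ pointwise only on $[\delta,1]$; on $[0,\delta)$ it gives only $\int_0^\delta f_t\le\delta+o(1)$. But at a fixed $y$ the short branches contribute
\[
\sum_{k+y>r/\delta}\frac{r}{(k+y)^2}\,f\Bigl(\frac{r}{k+y}\Bigr)\le\Bigl(\delta+\frac1r\Bigr)\sup_{(0,\delta)}f .
\]
This is a weighted sum of point values of $f$, and it becomes $\int_0^\delta f$ only after integrating over $y$. Take an $f$ equal to $1$ except for one tall, narrow spike at a point $r/(k+y_0)<\delta$. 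Then $\int_0^\delta f$ is as close to $\delta$ as you like, yet $(\mathcal{L}f)(y_0)$ is as large as you like. So the implication ``$f$ close to $1$ on $[\delta,1]$ and integrable $\Rightarrow$ $\mathcal{L}f$ close to $\sum_k r/(k+x)^2$'' is false, and your sup-norm statement on $[\delta,1]$ does not propagate.

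\textbf{The repair.} Carry a global bound through the induction; this also makes the truncation unnecessary. For $y\in[0,1)$ the inverse branches are $x=r/(k+y)$ with $k+y>r$. The smallest such $u_0=k+y$ lies in $(r,r+1]$, and this covers the boundary branch. Comparing $\sum_{j\ge0}r/(u_0+j)^2$ with $\int_{u_0}^\infty r u^{-2}\,du=r/u_0$ gives
\[
\frac{r}{r+1}\le(\mathcal{L}1)(y)\le1+\frac1r\qquad\text{for all }y\in[0,1).
\]
Since $\mathcal{L}$ is positive, $1-\varepsilon\le f\le1+\varepsilon$ implies $(1-\varepsilon)\frac{r}{r+1}\le\mathcal{L}f\le(1+\varepsilon)\bigl(1+\frac1r\bigr)$. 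Induction from $f_0\equiv1$ then yields $\sup_{[0,1)}|f_t-1|\le(1+\frac1r)^t-1$, which tends to $0$ for fixed $t$.

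Alternatively, work in $L^1$. The operator $\mathcal{L}$ is an $L^1$ contraction, since a measurable map cannot increase total variation distance. Hence $\|f_t-1\|_1\le\|f_{t-1}-1\|_1+\|\mathcal{L}1-1\|_1\le t/r$, and so $\sup_x|F_{T_r^t(X)}(x)-x|\le t/r$.

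Either way, the quantity to propagate is a global bound (sup or $L^1$) on $f_t-1$, not the localized one in your hypothesis.
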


\subsection{A Physical Analogy}

To gain a more visceral understanding of the dynamics of the $r$-CF map, we extend a physical analogy of the Gauss map made by  Corless, Frank, and Monroe \cite{corless}: suppose a bead is on a circular hoop located at an initial point $x_0$. Identifying the endpoints of $[0,1]$, we let the unit circle, $\mathbb{R}/\mathbb{Z}$, model this hoop. We may think of the Gauss map as taking this bead to a new location on the hoop, sliding it past the origin $a_1=\lfloor 1/x_0 \rfloor$ times (a ``winding number'' of sorts), and ending up at the position $x_1=G(x_0)$. Where the bead lands next on the hoop depends heavily upon the previous point $x_0$. The closer $x_0$ is to $0$, the stronger the next ``push'' which sends the bead around the hoop, and the larger the winding number. Continuing this process, we may iteratively push the bead around the hoop to its next point, $x_{t+1}=G(x_t)$, where the strength of each push depends upon the previous point, sending the bead past the origin $a_{t+1}=\lfloor 1/x_t \rfloor$ times.  The successive bead positions represent the tails of the simple continued fraction expansion of the real number $x=a_0+x_0$, and the partial quotients, $a_t$, are the winding number of each push. Since partial quotients are generally unbounded, we see that the map can send the bead around the hoop as many times as we like by making the initial point sufficiently close to $0$. If instead we use the $r$-CF map to find the maximal $r$-CF expansion of $x$, the larger $r$ becomes, the less information the initial point yields about the possible location of the next. For example, having no information other than the initial point being located in $[1/2,1]$, the Gauss map is more likely to return a value in the interval $[0,1/2]$ than $[1/2,1]$, as seen in Figure \ref{maps}(a). That is, if the bead's initial location is somewhere in the interval $[0,1/2]$, the next push will be so weak that it will more than likely not slide past the point $1/2$ on the hoop. In this way, the analogy for $r=1$ serves to hone our intuition for why relative frequencies of partial quotients are described by the Gauss-Kuzmin distribution in the case of simple continued fractions. On the other hand, we immediately see that by allowing $r$ to take on values greater than $1$, say $r=10$ (see Figure \ref{maps}(b)), the strength of the next push has increased substantially enough that the certainty of where the bead will land next is greatly reduced, even for a relatively small value of $r$. In this way, the extended analogy informs our intuition for why Theorem \ref{approx_uniform} and its corollary are true, as well as the reasoning behind using the $r$-CF map as a starting point for a PRNG.

\begin{figure*}
\centering
\begin{tabular}{cc}
 \includegraphics[scale=0.85]{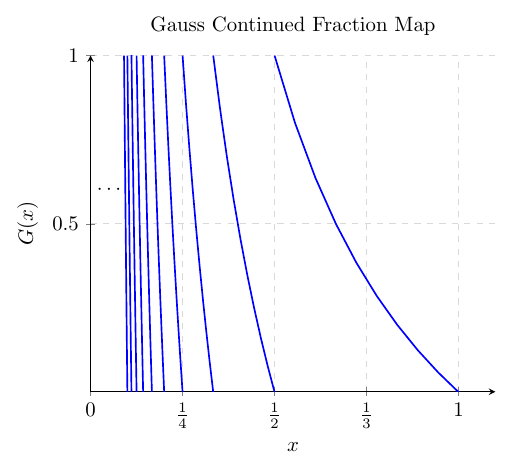} & \includegraphics[scale=0.85]{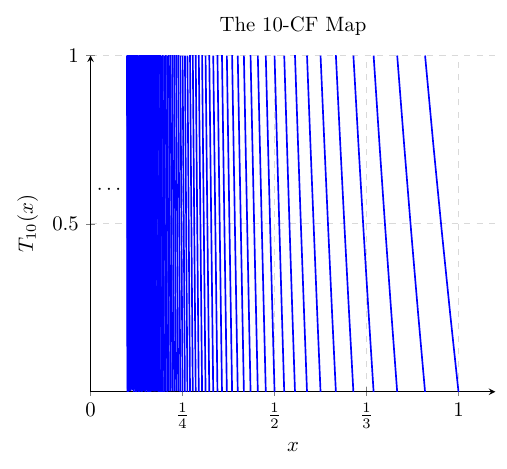} \\
 (a) & (b)
\end{tabular}
\caption[figure]{Graphs of the Gauss map (a) and $10$-CF map (b). The graphs are incomplete, as indicated by the notation ``$\cdots$'', since each one contains infinitely many branches.}\label{maps}
 \end{figure*}

\section{Pseudorandomness with the $r$-CF Map}\label{rcfprng}

From Section \ref{rcf}, we see that sequences $\{x_t\}$ defined by $x_{t+1}=G(x_t/r)$, for almost any initial point, are approximately uniform modulo $1$ for sufficiently large natural-number values of $r$. The result appears to apply to sufficiently large real-number values of $r$ as well, which is not surprising given the statement of Theorem \ref{mehmetaj}. Choosing $r$ to be a suitably large value approximating an irrational quantity, that is, assigning to $r$ a pseudorandomly chosen floating-point value between $10^3$ and $10^4$ (we will say more about this choice in Sections \ref{implementation} and \ref{future_directions}), and choosing a pseudorandom seed $x_0$ in $(0,1)$, these sequences pass most of the Dieharder tests with default parameters. The tests which they consistently fail are the {\tt marsaglia\_tsang\_gcd} and {\tt dab\_bytedistrib} tests. They also see several weak and failed lagged-sum test results. These sequences also routinely pass SmallCrush. We suspect that these sequences do not pass all tests in Crush and BigCrush.

Curiously, for certain Dieharder tests, the quality of the output appears to be highly sensitive to the choice of $r$. That is, some tests either routinely yield a pass result or fail result, even for very similar values of $r$. For example, on the machine we used for testing (see Section \ref{implementation}), creatively choosing an $r$ which approximates $\sqrt{12345678}$, the output consistently passes the {\tt dab\_bytedistrib} test, whereas, a choice of $\sqrt{12345679}$ yields consistent failure. Although these sequences show some promise in that they pass many of the Dieharder tests for certain choices of $r$, they still lack other statistical features which make for quality pseudorandom output. For these values of $r$, they also pass nearly all SmallCrush tests with the exception of the {\tt MaxOft} test. To this point, regardless of how we choose $r$, these sequences resoundingly fail in PractRand at or before $1$ gigabyte of output. Most typically failure occurs between $64$ to $256$ megabytes. The most pronounced failures tend to occur in the lower-order bits.

To improve the results, we manufacture another source of variation by allowing $r$ to vary with each iteration. Let $\{R_t\}$ be a sequence of random variables, where each $R_t$ is independently and uniformly distributed on $[A,B]$ with $0<A<B$. Also, let $X$ be uniformly distributed on $[0,1]$. Assuming independence of $X$ and $R_t$ for each $t$, and assuming $A$ and $B$ to be sufficiently large, Corollary \ref{iterates} gives us a family of joint random variables, $T_{R_t}^t(X)$, indexed by $t$, each with approximate cumulative density $F_{X,R_t}(x,y)=x\frac{y-A}{B-A}$. The independence of each $R_t$ guarantees the independence of each $T_{R_t}^t(X)$. Then, for $x_0$ chosen randomly from $(0,1)$, and a sample path $\{r_t\}$ of $\{R_t\}$, we may say that, by an application of the strong law of large numbers and Weyl's criterion, the sequence $\{T_{r_t}^t(x_0)\}$ is approximately uniformly distributed modulo $1$.

If instead $\{r_t\}$ is a pseudorandom sequence, simulating a uniform distribution on $[A,B]$, then, by the above arguments, the sequence $\{T_{r_t}^t(x_0)\}$, which we may calculate as $x_{t+1}=G(x_t/r_t)$, should also simulate an approximately uniform distribution on $[0,1]$. Then, the question becomes how to simulate $r_t$. Here, we introduce another source of pseudorandomness: define a second sequence $\{y_t\}$, computed as $y_{t+1}=G(y_t/s_t)$, where $\{s_t\}$ is also uniformly distributed on the interval $[A,B]$. Since we can expect $\{y_t\}$ to be approximately uniform on $[0,1]$, we may define $\{r_t\}$ by the update rule $r_{t}=A+(B-A)y_{t}$, which we can expect to be uniformly distributed on $[A,B]$. In turn, as argued above, we may still expect that $\{x_t\}$ is approximately uniformly distributed on $[0,1]$. This enables us to then simulate $\{s_t\}$ with the update rule $s_{t}=A+(B-A)x_{t+1}.$ That is, given a seed $(x_0,y_0)$, where $x_0$ and $y_0$ are randomly chosen from $(0,1)$, we have a vector, $(x_t,y_t)$, which may be updated according to the rule
\begin{align*}
 r_t&=A+(B-A)y_t, \\
 x_{t+1}&=G(x_t/r_t), \\
 s_t&=A+(B-A)x_{t+1},\\
 y_{t+1}&=G(y_t/s_t).
\end{align*}
Implementing the above, and choosing $(x_0,y_0)$ pseudorandomly, preliminary testing of the output stream, $\lfloor x_0 \times 2^{32} \rfloor, \lfloor y_0 \times 2^{32}\rfloor, \lfloor x_1 \times 2^{32} \rfloor, \lfloor y_1 \times 2^{32}\rfloor,\ldots$, showed substantially improved results: both the forward- and reverse-bit versions passed all tests in PractRand to one terabyte (without further testing) and consistently passed all tests in SmallCrush. Here, we note that similar two-dimensional systems based on the logistic map, rather than the $r$-CF map, have been applied successfully to creating PRNGs suitable for cryptographic uses \cite{ahmad}.

To simplify both notation and implementation, we restate the above equations in terms of assignment operators:
\begin{align*}
 r&:=A+(B-A)y, \\
 x&:=G(x/r), \\
 s&:=A+(B-A)x,\\
 y&:=G(y/s).
\end{align*}
With an eye toward more robust performance, the above algorithm is easily generalized to an arbitrary number of dimensions, $n$, without any  compromise to its speed (see Table \ref{benchmark_table} in Section \ref{speed}). Let $(x_0,x_1,\ldots,x_{n-1})$ be a vector (array) of values in $(0,1)$. At each step, we update $x_j$ according to the assignment rule
\begin{align}
 r_j&:=A+(B-A)x_{\rho(j+1)},\label{x_update_0}\\
 x_j&:=G(x_j/r_j),\label{x_update_1}\\
 j&:=\rho(j+1),\label{x_update_2}
 \end{align}
where $\rho$ gives the least non-negative residue modulo $n$. The updated value $x_j$ is the output at each step.

With the reasoning of the above understood, we now write Equations (\ref{x_update_0}) and (\ref{x_update_1}) into a single update rule,
\begin{equation}\label{x_update_3}
 x_j:=G\left(\frac{x_j}{A+(B-A)x_{\rho(j+1)}}\right).
\end{equation}

It will be notationally convenient to reintroduce a ``time'' step, $t$, incremented as $t:=t+1$, among the list of variables contained in Equations (\ref{x_update_0}), (\ref{x_update_1}), and (\ref{x_update_2}). We will denote the $j^{th}$ component of the state vector, $(x_0,x_1,\ldots,x_{n-1})$, at time $t$ as $x_j^{(t)}$. We may then re-express the update rule in terms of $t$: at each time step, $t$, the $\rho(t)^{th}$ component of the state vector, $(x_0^{(t)}, x_1^{(t)}, \ldots, x_{n-1}^{(t)})$, is updated by the equality statement
\begin{equation}\label{x_update}
 x_{\rho(t)}^{(t+1)}=G\left(\frac{x_{\rho(t)}^{(t)}}{A+(B-A)x_{\rho(t+1)}^{(t)}}\right)
\end{equation}
to obtain the next state, $(x_0^{(t+1)}, x_1^{(t+1)}, \ldots, x_{n-1}^{(t+1)})$. The output of the generator at time $t$ is $x_{\rho(t)}^{(t+1)}$.

We will refer to the generator described above as the {\it $r$-CF generator}.

\section{Implementation}\label{implementation}

The $r$-CF generator, as given by Equation (\ref{x_update_3}), was implemented and tested  in C++. All code was compiled using the \texttt{g++} compiler in Linux. The hardware used for building, compiling, and running all software, including Dieharder, PractRand (v0.96), TestU01, and Google Benchmark software \cite{google}, was a Dell XPS $13$ laptop with an Intel Core i7-8565U, $8$th generation processor. The generator function was implemented as follows:
\begin{verbatim}
double G(long double x)
{
     if(x>0 && x<1)
          x=1/x-floor(1/x);
     else
          x=double(rand())/double(RAND_MAX);
     return x;
}
const long long BIG_NUM=pow(2,32);
const double A=1000;
const double B=10000;
const int n=1000;
static int j=0;
static double x[n];
uint32_t rcf(void)
{
     x[j]=G(x[j]/(A+(B-A)*x[(j+1)%n]));
     uint32_t next_num = floor(x[j]*BIG_NUM);
     j=(j+1)%n;
     return next_num;
}
\end{verbatim}

The generator was seeded by initializing the array {\tt x[n]} with pseudorandomly chosen values from $(0,1)$, calculated as {\tt double(rand())/double(RAND\_MAX)}, where \texttt{rand()} is the native PRNG in the standard C library, seeded by the machine clock before each run. As seen in the above code fragment, for all statistical randomness testing, sequences were produced with the parameters $A=1000$, $B=10000$, and $n=1000$. Also, to simulate the dynamics of $G$ as closely as possible, its argument was declared as a {\tt long double} type. Declaring any lower-bit type resulted in poor performance when running statistical tests. All other floating-point types were declared as {\tt double}. To account for possible overflow issues in the finite-precision arithmetic, the definition of $G$, expressed by Equation (\ref{GCF}), was modified to reseed the generator in the case that $G$ returns a value which is not in $(0,1)$. We suspect that this modification may be the reason why allowing $A$ and $B$ to take on very large values tends to produce lower quality output. This explains the choice of parameters $A$ and $B$ above: these values circumvent overflow issues while still producing values, $r_j$, which ensure that the map $G(x/r_j)$ produces approximately uniform output. A fuller discussion of the limitations of finite-precision arithmetic when simulating the $r$-CF map is given in Section \ref{future_directions}.

With the above, we may test the output in Dieharder, PractRand, and TestU01 by producing a stream of $32$-bit integers, $\left \lfloor x_{\rho(t)}^{(t+1)} \times 2^{32} \right \rfloor$. The output was piped directly into the Dieharder and PractRand suites in raw binary form. The $32$-bit integer output was also tested in the TestU01 environment. We note that the above generator may be easily modified to a $64$-bit version. For convenience, however, in utilizing certain test environments, we only implemented and tested the $32$-bit version.

\section{Results and Discussion}\label{results_and_discussion}

\subsection{Graphical Analysis}

Checking the decimal output for obvious flaws, anomalies, biases, or any other artifacts that would suggest compromised performance, we examined the usual graphical summaries of the output. Figure \ref{histogram} displays histograms of $10^5$ consecutive outputs for a typical run produced by the method described in Section \ref{rcfprng}. Figure \ref{scatter} plots ordered pairs of successive outputs, $(x_{\rho(t)}^{(t+1)},x_{\rho(t+1)}^{(t+2)})$, for these same $10^5$ consecutive outputs. The autocorrelation graph for the vector containing these $10^5$ values is shown in Figure \ref{autocorr}. In all, what we observe does not differ in any noticeable way from what we would expect from truly random noise.

\begin{figure*}
\centering
\begin{tabular}{cc}
 \includegraphics[scale=0.6]{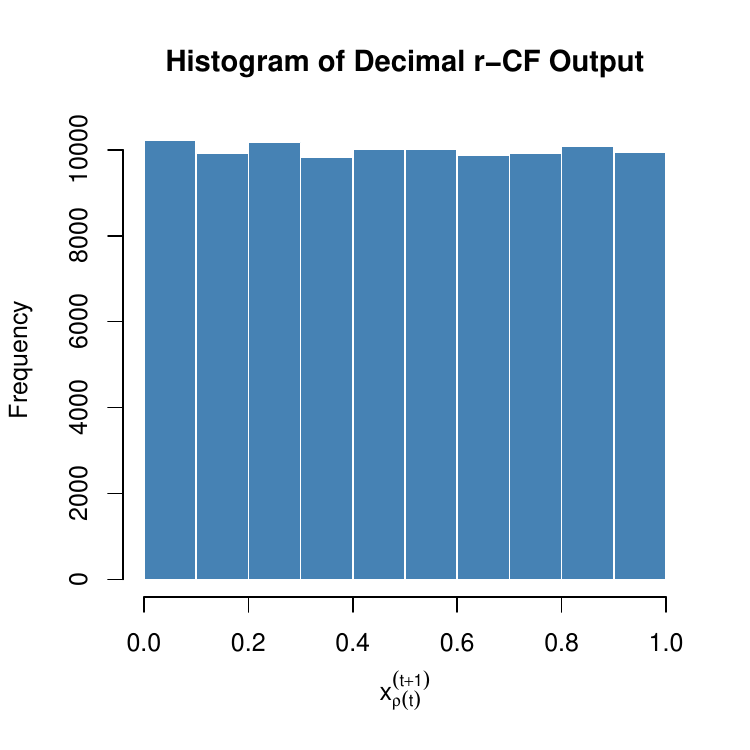} & \includegraphics[scale=0.6]{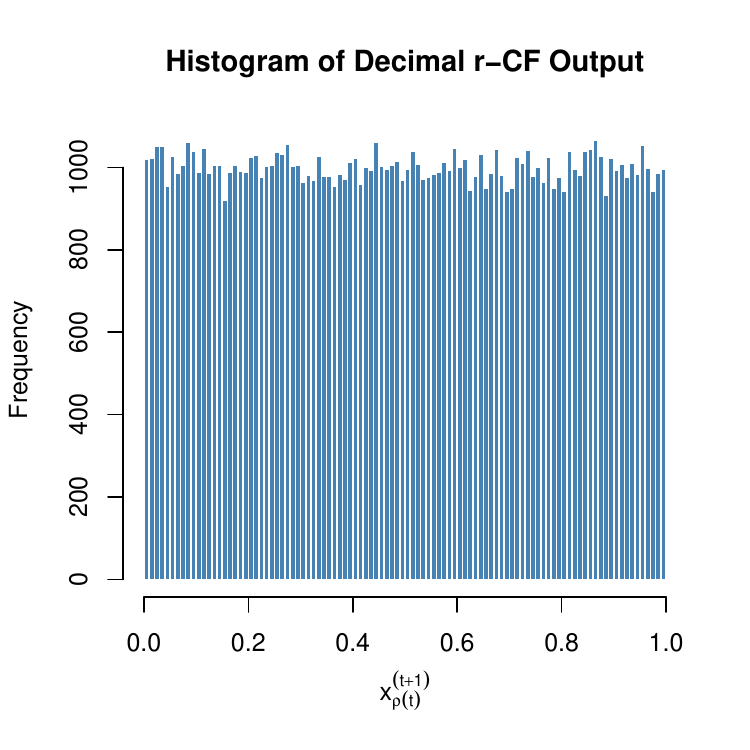} \\
 (a) & (b)
\end{tabular}
\caption[figure]{Frequency histograms of $10^5$ consecutive outputs, $x_{\rho(t)}^{(t+1)}$, of a typical run for $10^5\leq t < 2\times 10^5$ with a bin width of $0.1$ (a) and $0.01$ (b).}\label{histogram}
\end{figure*}

\begin{figure}
\centering
 \includegraphics[scale=0.6]{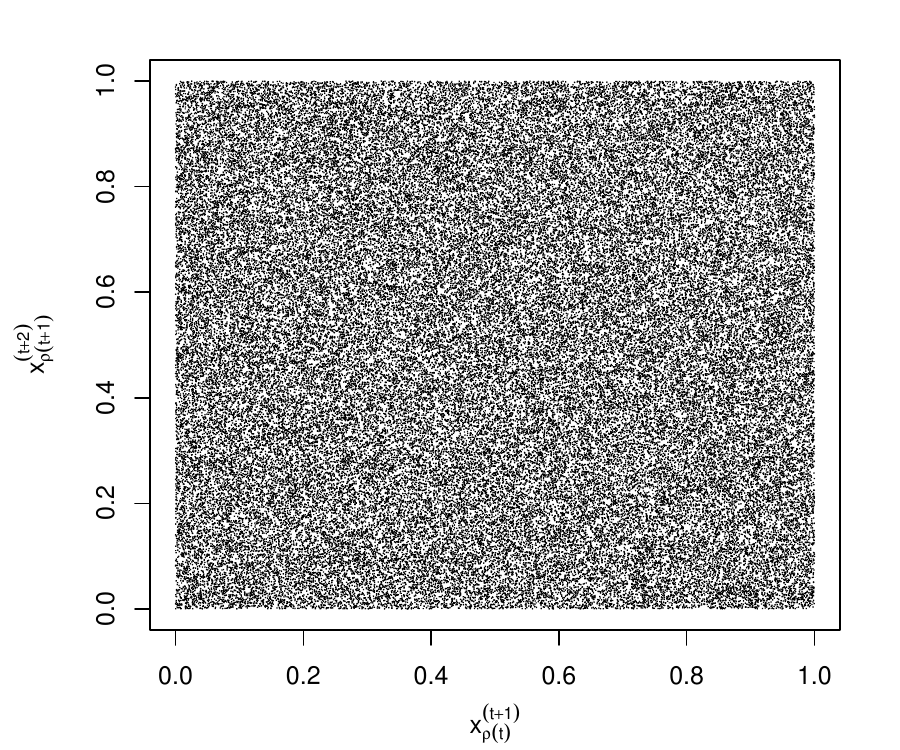}
\caption[figure]{A plot of $10^5$ consecutive outputs, $(x_{\rho(t)}^{(t+1)},x_{\rho(t+1)}^{(t+2)})$, where $10^5\leq t < 2\times 10^5$.} \label{scatter}
 \end{figure}

\begin{figure}
\centering
 \includegraphics[scale=0.7]{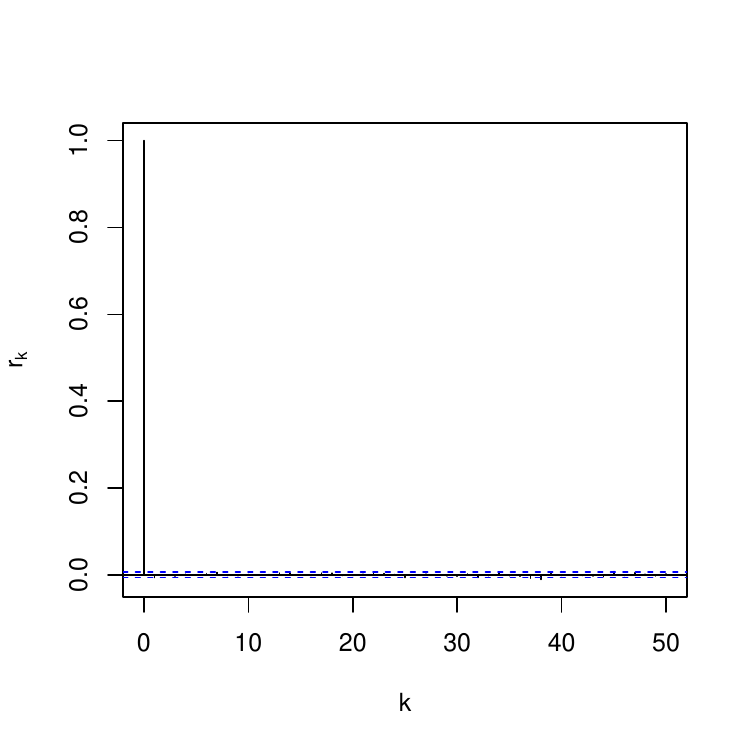}
\caption[figure]{A plot of the the autocorrelation, $r_k$, against the lag, $k$, of a vector of $10^5$ consecutive outputs, $x_{\rho(t)}^{(t+1)}$, where $10^5\leq t < 2\times 10^5$.}\label{autocorr}
\end{figure}

\subsection{Statistical Testing}

We tested the $r$-CF generator using three testing suites, Dieharder, PractRand, and TestU01, using default parameters in each suite. Here, the reader will recall that the generator was re-seeded by the machine clock at the beginning of each run, as described in Section \ref{implementation}. Other than the expected weak or suspicious results, the output consistently passes every test in each of these suites.

Over five runs of each, the forward- and reverse-bit output easily passes Dieharder, and we encountered no failed or consistently weak results in any of its tests.

In PractRand, the output from four runs of both the forward- and reverse-bit generators passed all tests up to $8$ terabytes of output. We have not tested beyond this due to prohibitively long run times. More testing will need to be done with this software, preferably to at least $32$ terabytes.

The most extensive testing was carried out in TestU01, and consisted of twenty consecutive runs of the BigCrush battery: ten testing the forward-bit output, and ten testing the reverse-bit output. The results are summarized in Tables \ref{bigcrush} and \ref{bigcrush_rev}, respectively. In its end-of-run summary, TestU01 flags any of its test statistics if its corresponding $p$-value does not belong to $[0.001,0.999]$. As expected, over repeated applications of the BigCrush battery, we observed some flagged results, as seen in Tables \ref{bigcrush} and \ref{bigcrush_rev}.

\begin{table}\centering
\begin{tabular}{llll}
Run & Test & Summary results of BigCrush & $p$-value\\
\hline
1  & -- & All tests were passed    &\\
2  & -- & All tests were passed    &\\
3  & 47 & MaxOft AD, t = 16 & 0.9995 \\
4  & -- & All tests were passed    &\\
5  & -- & All tests were passed &\\
6  & 72 & Savir2 & 4.2e-4 \\
7  & -- & All tests were passed    &\\
8  &  9 & CollisionOver, t = 14    &       0.9993\\
   & 101&  Run of bits, r = 0       &       1.9e-5\\
9  & -- & All tests were passed    &\\
10 & 32 & CouponCollector, r = 20   & 8.2e-4\\
\end{tabular}
\caption{The results of ten consecutive forward-bit runs of the BigCrush battery, which include tests yielding a $p$-value outside of the interval $[0.001,0.999]$ in the end-of-run summary. Each run used its own pseudorandomly chosen seed as described in Section \ref{implementation}.}\label{bigcrush}
\end{table}

\begin{table}\centering
\begin{tabular}{llll}
Run & Test & Summary results of BigCrush & $p$-value\\
\hline
1   &  -- & All tests were passed    &\\
2   & 24 & ClosePairs NP, t = 9 & 0.9995  \\
3   & -- & All tests were passed    &\\
4   & -- & All tests were passed    &\\
5   & 75 &  RandomWalk1 R (L=50, r=25) & 0.9997\\
6   & 7  & CollisionOver, t = 7 &  0.9998 \\
7   & -- & All tests were passed    &\\
8   & -- & All tests were passed    &\\
9   & -- & All tests were passed    &\\
10  & -- & All tests were passed    &\\
\end{tabular}
\caption{The results of ten consecutive reverse-bit runs of the BigCrush battery, which include tests yielding a $p$-value outside of the interval $[0.001,0.999]$ in the end-of-run summary. Each run used its own pseudorandomly chosen seed as described in Section \ref{implementation}.}
 \label{bigcrush_rev}
\end{table}

According to the protocol of L'Ecuyer and Simard \cite{testu01}, a generator exhibits {\it clear failure} when the $p$-value of a test falls outside of the interval $[10^{-10},1-10^{-10}]$. None of the flagged results in Tables \ref{bigcrush} and \ref{bigcrush_rev} represent clear failures according to this standard. In fact, most suspect $p$-values are borderline (near the endpoints of $[0.001,0.999]$). Moreover, we did not observe any consistently suspicious behavior in any of the $106$ tests (consisting of $160$ test statistics) in the BigCrush battery over both the forward- and reverse-bit runs. L'Ecuyer and Simard \cite{testu01} state the following:
\begin{quote}
If the $p$-value is suspicious but does not clearly indicate rejection ($p= 0.002$, for example), then the test can be replicated `independently' with disjoint output sequences from the same generator until either failure becomes obvious or suspicion disappears.
\end{quote}
Since the generator was re-seeded at the start of each run, we may consider each run to be independent of any other in a pseudorandom sense. This is to say that, for both the forward- and reverse-bit generators, each test was ``independently'' replicated $10$ times with no repeatedly suspicious results in any single test as seen in Tables \ref{bigcrush} and \ref{bigcrush_rev}. Thus, in line with the recommendation of L'Ecuyer and Simard \cite{testu01}, we conclude that the output of the $r$-CF generator does not exhibit any of the systemic flaws in pseudorandom output which the BigCrush battery is intended to detect.

Comparing the above results to those of a  generator in widespread use, we point out that the Mersenne Twister \cite{mersenne} routinely fails certain tests in TestU01 \cite{testu01,oneill}, most notably those involving linear complexity. On the other hand, as exhibited by Tables \ref{bigcrush} and \ref{bigcrush_rev},
the $r$-CF generator routinely passes all tests in BigCrush. Also, since we have not observed a single failure in TestU01, the $r$-CF generator appears to hold its own quite well when compared to those tested by L'Ecuyer and Simard \cite{testu01}. We also note that the Mersenne Twister begins failing PractRand at around $256$ to $512$ gigabytes of output \cite{practrand,oneill_2}, while the $r$-CF generator over four consecutive forward- and reverse-bit runs has passed all tests in PractRand up to $8$ terabytes. To this we add that in all of our testing in PractRand, which involved scores of runs examining various volumes of output spanning $32$ gigabytes to one terabyte, we have not observed a single failure, with the exception of those runs for which a deliberately poor choice of seed was made (see Section \ref{seed}).

In total, the above results provide solid evidence that the $r$-CF generator produces quality pseudorandom output. Of course, while these results are promising, we caution that more testing still needs to be done. In particular, we recommend testing larger volumes of output in PractRand, at least up to $32$ terabytes. Also, it would be desirable to use testing software beside that which we have used here. For example, the documentation of PractRand \cite{practrand} recommends the use of gjrand \cite{gjrand}, which we did not use in our evaluation of the $r$-CF generator. Another example of testing software worthy of note is the emerging SmokeRand \cite{smokerand} suite, which incorporates features from existing suites while expanding flaw-detecting capabilities. Future efforts might consider applying the above testing software. We also note that an assessment of the performance of a $64$-bit version of the $r$-CF generator has yet to be carried out. Finally, it must be emphasized that while statistical testing may be useful for singling out known flaws in many PRNGs \cite{practrand,foreman,testu01}, much analysis remains to be done on both the exact and finite-precision $r$-CF maps and their interactions described by Equations (\ref{x_update_3}) and (\ref{x_update}). We discuss these considerations in Section \ref{future_directions}.

\subsection{Speed of Output}\label{speed}
To give the reader a sense of the speed at which the $r$-CF generator produces output, we again compare it to a generator that is likely familiar to the reader, the Mersenne Twister, which is widely used for its balance of output speed and statistical quality in non-cryptographic applications. Although the statistical quality of the $r$-CF generator appears to outstrip that of the Mersenne Twister, its speed is markedly slower. We performed a benchmark comparison of the $r$-CF generator to the C++ implementation of the 32-bit Mersenne Twister, {\tt std::mt19937}, included in the {\tt <random>} library. This comparison utilized the Google Benchmark C++ library \cite{google} to measure wall-clock execution times, reported as the average number of nanoseconds to complete an iteration over some number of iterations determined by the user. The benchmark code containing both generators was compiled optimally using the {\tt -O3} and {\tt -march-native} flags, the latter of which is justified since both generators were tested on the same machine within a single run of the software. We collected $50$ estimates of execution times for both generators, each of which was computed over $2 \times 10^8$ iterations to minimize variation between these estimates. To this same end, we ran the code on a single processor. To run the code with these requirements, we used the command {\tt taskset -c 0  ./rcf\_bench --benchmark\_min\_time=200000000x --benchmark\_repetitions=50} in the terminal window. Again to minimize variation, power-saving settings were disabled when executing the code. We also note that we included a ``warm-up'' loop in the benchmark code which was executed before measuring speeds of the $r$-CF and Mersenne Twister generators. Finally, in order to obtain the most consistent and replicable results, these data were gathered when the machine was supplied with power from a wall outlet. The summary statistics of these data are reported in Table \ref{benchmark_table}.

\begin{table}
\centering
\begin{tabular}{lllll}
Generator & Output Type & Sample Size & Mean (ns) & Standard Deviation (ns) \\
\hline
$r$-CF ($n=1000$) & floating-point ({\tt double})& 50& 5.169 & 0.0209 \\
$r$-CF ($n=1000$) & $32$-bit integer ({\tt uint32\_t}) &50 & 6.1364 & 0.0174 \\
$r$-CF ($n=10000$) & $32$-bit integer ({\tt uint32\_t})&50&6.203 & 0.0313 \\
$r$-CF ($n=2$) & $32$-bit integer ({\tt uint32\_t})& 50& 16.528	& 0.1161 \\
Mersenne Twister & 32-bit integer ({\tt uint32\_t}) & 50 & 2.3846 & 0.0468 \\
\end{tabular}
\caption{The mean and standard deviation of $50$ wall-clock execution times of the Mersenne Twister generator and several variations of the $r$-CF generator, measured as the average number of nanoseconds to complete one iteration over $2 \times 10^8$ iterations, estimated by the Google Benchmark C++ library.}
\label{benchmark_table}
\end{table}

Using the wall-clock time statistics in Table \ref{benchmark_table}, we estimate that, on average, the $32$-bit Mersenne Twister produces output at approximately $2.6$ times the rate of the $32$-bit-integer $r$-CF generator, and $2.2$ times the rate of the floating-point $r$-CF generator, as they are presently implemented.

Since the $r$-CF generator algorithm is arithmetically intensive, and does not use bit-level operations, these results are not unexpected. Streamlining the implementation of the $r$-CF generator is yet another avenue of inquiry.

\subsection{Choice of Seed}\label{seed}

Not surprisingly, seeding the generator with a constant vector, $(x_0,x_0,\ldots,x_0)$, where $x_0$ is chosen pseudorandomly from $(0,1)$, results in failure in PractRand. On the other hand, seeding the generator with a ``nearly'' constant vector, that is, setting $(x_0^{(0)}\ldots,x_n^{(0)})$ instead to $(x_0+\varepsilon_0,x_0+\varepsilon_1,\ldots,x_0+\varepsilon_n)$, where each $\varepsilon_j$ is chosen uniformly from an interval of the form $[-10^{-k},10^{-k}]$ for each $0 \leq j \leq n$, the output still performs well in PractRand so long as $k<10$. Beyond this value, the output will begin to show signs of failure, often resulting in PractRand terminating the run when true failure is detected. If, however, we allow the generator to run further by using the {\tt -tlmin} flag, any catastrophic anomalies seem to correct themselves. For example, setting $k=15$, the generator fails after the first iteration of PractRand. If we allow the test to continue, forcing PractRand to examine at least $512$ megabytes to one gigabyte of output, no failures are reported, although some anomalies can persist for some time before resolving themselves.

In summary, even with a deliberately poor choice of seed, perturbing the system by even a small amount allows the system to return relatively quickly to producing what appears to be robust output. 
In this way, and given its performance in repeated testing, it appears that seeding the generator is a relatively straightforward task.

\subsection{Estimating the Period}

The state space of the $r$-CF generator can be as large as the user sees fit. The state of the generator at step $t$ is the vector $(x_0^{(t)},x_1^{(t)},\ldots,x_{n-1}^{(t)})$. If each $x_j^{(t)}$ occupies $b$ bits of memory, then each state occupies $nb$ bits of memory, indicating a maximal state space size of $2^{nb}$. Declaring each $x_j^{(t)}$ as a {\tt double} type in C++, as we have done in our simulations ($53$ available bits), and a vector length of $n=1000$, indicates a very large state space and upper bound on the period. Of course, we are far more interested in estimating a lower bound on the period of a typical run. Given that predicting the period of a finite-precision implementation of the Gauss map after transient effects is not yet generally possible \cite{corless}, predicting the period of the output from Equation (\ref{x_update}) presently appears to be analytically intractable. That said, given the larger state space together with the generator's consistently solid performance in all three test suites used by this effort, we suspect the period to be quite large.  This is a topic which requires further study.

\section{Summary of Effort, Concluding Remarks, and Future Work}\label{future_directions}

The purpose of this effort was to highlight the potential suitability of the $r$-CF map (a generalization of the Gauss continued-fraction map) as a novel starting point for generating  quality pseudorandom output. Although this effort has demonstrated that the map shows much promise in this regard, this paper leaves many stones unturned.

One of the most important considerations is the ultimately periodic behavior of the output, which was not satisfactorily addressed by this effort. Although it is not entirely unreasonable to imagine that the period could be quite long given its performance in testing, this is clearly no substitute for establishing some sort of analytically-obtained lower bound, and future efforts should seek to address this.

Another vital consideration is the speed at which the $r$-CF generator produces output. Using a popular PRNG for comparison, we observed that, on average, the 32-bit Mersenne Twister produces output at approximately $2.6$ times the rate of the $r$-CF generator. This may render the generator unsuitable for applications for which speed is a priority. Future efforts might attempt to understand whether the implementation described in Section \ref{implementation} can be optimized to improve its performance in this regard.

The above trade off in output speed might be justified if the $r$-CF generator, or some variation of it, has the potential to produce cryptographically secure output. As noted earlier, chaotic maps which have received more attention have been modified successfully for this purpose \cite{ahmad,irfan,naik,wang}. Thus, another natural follow-up effort is to investigate the potential for the methods presented here to be used or adapted to create a cryptographically secure PRNG. We note that one of the difficulties presented by the finite-precision Gauss map is that its behavior is heavily influenced by how floating-point arithmetic is implemented \cite{corless}. That is, portability across compiler and hardware could be an issue to consider for such an effort.

For larger values of $n$, the output appears to be quite robust against an obviously poor choice of seed when perturbed by even a small amount, as seen in Section \ref{seed}. Also, given that every run of Dieharder, Practrand, and BigCrush used a different, pseudorandomly chosen seed, it appears that seeding the $r$-CF generator is not at all a difficult task. Still, we must ask if there are any considerations or best practices when seeding this generator, and we leave these investigations for further research.

The $r$-CF generator algorithm naturally suggests an abundance of variations, several of which which we briefly discuss here. To some extent, all of these involve modifications to Equation (\ref{x_update_0}), which updates $r_j$ to simulate an external, ``independent'' source of variation as discussed in Section \ref{rcfprng}. One family of modifications we considered in the course of this effort are those of the form $r_j:=A+(B-A)x_{\psi(j)}$, where $\psi$ is some map from $\{0,1,2,\ldots,n-1\}$ into itself (not necessarily bijective). For instance, choosing a lag other than one, that is, choosing $\psi(j)=\rho(j+\ell)$, where $1<\ell< n$, has yielded promising results. A modification which we tested in the same manner as described in Section \ref{results_and_discussion} is $\psi(j)=\left \lfloor nx_j \right \rfloor$, and we obtained results similar to those given in previous sections of this article. That is, we can say with much certainty that this version passes Dieharder, BigCrush, and PractRand (to $8$ terabytes output). However, in the interest of simplicity,  we opted to present in detail the most natural and computationally efficient method for future efforts to build upon.

In addition to the above, we present two other variations which are worthy of mention, and have shown promise in preliminary testing. The $r$-CF generator may easily be modified into a randomness extractor. Instead of having the system itself simulate $r_j$, take any uniform pseudorandom integer sequence, $\{v_t\}$, on $[0,M]$, where $M$ is a large natural number, and replace Equation (\ref{x_update_0}) with the update rule  $r_j:=A+(B-A)v_t/M$. This method appears to reliably produce output of respectable quality, even if $\{v_t\}$ is of exceedingly poor quality. To illustrate our point, we chose one of the worst options: using the much maligned RANDU generator \cite{knuth} with seed $v_0=1$, one run of both the forward- and reverse-bit generator passed PractRand up to one terabyte with no failures. The output also  consistently passes SmallCrush. Further testing will need to be carried out. Another, much simpler, variation we considered is also guided by the same reasoning presented in Section \ref{rcfprng}: take two values $x_0$ and $x_1$, drawn (pseudo)randomly from $(0,1)$, and generate the sequence $x_{t+2}=G\left(\frac{x_t}{A+(B-A)x_{t+1}}\right)$. One run of both the forward- and reverse-bit output (converted to $32$-bit integers) passed PractRand up to one terabyte with no failures, and the output consistently passes SmallCrush. Again, further testing is required. On one hand, the simplicity of this method offers the advantage of being far more amenable to rigorous analytical methods. On the other hand, the reduced state space raises more pronounced concerns surrounding periodicity and seeding practices, and any future effort would need to address such issues. Such an undertaking would be worthwhile provided the output stands up well to more extensive testing.

With the above in mind, another area of future work is understanding the properties of the exact $r$-CF map and the sequences produced by the $r$-CF generator. Although the output may pass large batteries of statistical tests, its exact nature is far from being well understood. In particular, rigorous results concerning important features, such as discrepancy, gaps, entropy, etc., as well as topological considerations, including mixing properties, are essential for the complete understanding of the suitability of the $r$-CF generator for deployment.

As already mentioned, this effort also puts on display what little is known about the exact $r$-CF map and its finite-precision implementation. It bears repeating an earlier caveat that simulating the dynamics of the $r$-CF map using finite-precision arithmetic is another area which is worthy of future scrutiny. For the Gauss map, $G$, Corless, Frank, and Monroe \cite{corless} demonstrate that the fixed-precision map may still be used to glean properties and behavior of the exact map through simulation. In particular, it turns out that estimating the Lyapunov exponent from a simulated orbit is a reasonable prospect. Also, these efforts demonstrate that the orbits of the finite-precision map are ``machine close'' to the exact orbits of $G$ in the sense that each element of the simulated orbit does not differ from the exact orbit by more than four times the size of the smallest machine representable number in a given implementation. This is all to say that the finite-precision dynamics are a useful approximation of those of the exact map. We ask if similar results exist for the $r$-CF map.

Returning to the exact $r$-CF map, one notably absent consideration in the literature is the seemingly chaotic properties of $T_r$. While $T_r$ appears to inherit some of the classical properties of $G$, it has yet to be seen whether it does so for all of these properties, and for all possible values of $r$. Although, as seen earlier, with the caveat mentioned in the previous paragraph firmly in mind, it does appear that for any $r\geq 1$ almost every orbit of $T_r$ has a positive Lyapunov exponent.

It is worth highlighting here that another generalization of the Gauss map considered by Beck, Tirnakli, and Tsallis \cite{beck} is chaotic. The map $\displaystyle T_{\alpha}(x)=G(x^{\alpha})$, where $\alpha$ is a positive real number, inherits the chaotic nature of $G$ so long as $\alpha$ lies above a critical threshold, $\alpha_c,$ estimated to be close to the value $0.24148514180881.$ We suspect that a two-parameter generalization $T_{r,\alpha}=G(x^{\alpha}/r)$ also inherits the chaotic properties of $G$ for all $r \geq 1$. This two-parameter map certainly appears to have an almost-everywhere positive Lyapunov exponent as seen in Figure \ref{lyap_alpha}. For completeness, we note that in the course of our investigations, we also observed promising statistical results when implementing Equation (\ref{x_update}), except using $T_{r,\alpha}$ instead of $T_r$, and updating $\alpha_{\rho(t)}^{(t)}$ as $\alpha_{\rho(t)}^{(t+1)}:=C+(D-C)x_{\rho(t+2)}^{(t)}$ with $C=0.5$ and $D=1$. However, the computational complexity incurred by computing a non-integer exponent was not worth what appeared to be little, if any, gain in the quality of the output.

\begin{figure*}
\centering
 \begin{tabular}{cc}
 \includegraphics[scale=0.6]{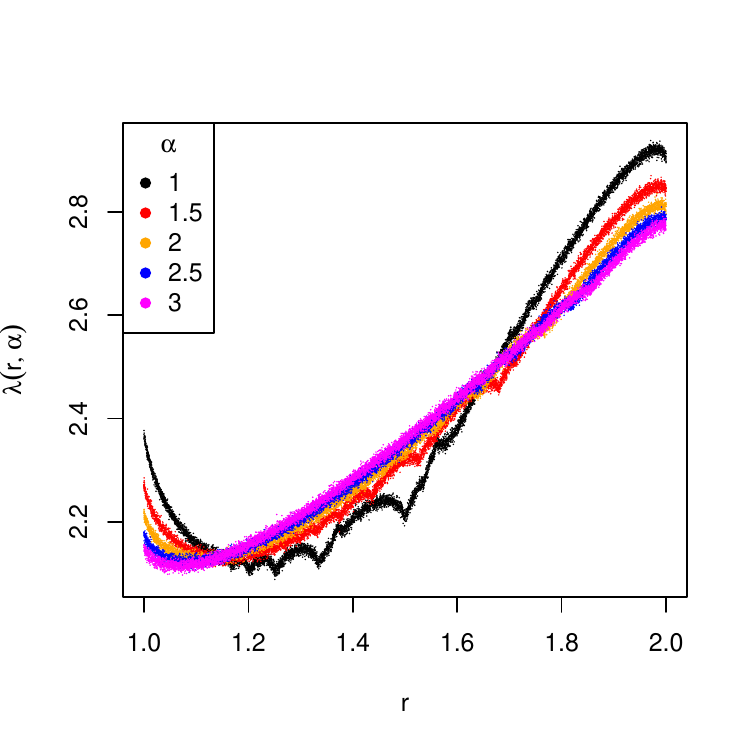} & \includegraphics[scale=0.6]{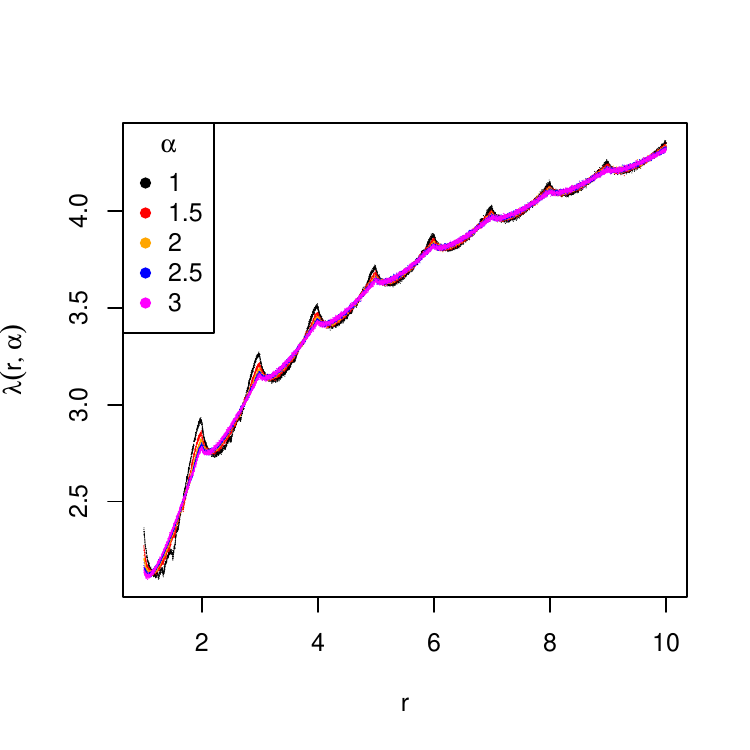} \\
 (a) & (b)
\end{tabular}
\caption[figure]{The estimated Lyapunov exponent, $\lambda$, of $T_{r,\alpha}$ as $r$ varies over $[1,2]$ (a) and $[1,10]$ (b) for $\alpha\in\{1, 1.5,2,2.5,3\}$. Each curve corresponding to a particular value of $\alpha$ was generated by pseudorandomly sampling $10000$ values of $r$ from $[1,2]$ (a) and $[1,10]$ (b), and then for each value of $r$, estimating $\lambda$ from an orbit of a pseudorandomly chosen point in $(0,1)$.}\label{lyap_alpha}
\end{figure*}

We note that in generating Figure \ref{lyap_alpha}, large values of $r$ divided by small values of $x$ raised to a power $\alpha>1$, can result in arithmetic which exceeds the capabilities of finite-precision arithmetic. As $r$ and $\alpha$ increase, such cases become more likely. In these cases, we simply sampled another point value of $r$ and $x_0$ until the desired number of points was achieved. In the case of the $r$-CF generator, as mentioned earlier, when the arithmetic exceeded the capacity of the machine, the generator was seeded with a call to {\tt rand()}. Again, this explains why we chose the parameters $A=1000$ and $B=10000$: higher values tend to result in overflow, requiring more re-seeding calls to {\tt rand()}, resulting in diminished quality and speed.

Finally, Figures \ref{lyap} and \ref{lyap_alpha} suggest the following conjecture:

\begin{conjecture}
Let $\lambda(r,\alpha)$ be the Lyapunov exponent of the orbit of $x_0\in[0,1]$ under $T_{r,\alpha}$. Then, for almost every $x_0$,
$
\lim_{\alpha \rightarrow \infty}\left \Vert 2+\log r -\lambda(r,\alpha)\right\Vert_{\infty}=0.
$
\end{conjecture}

\begin{acknowledgement}
 The author thanks Christopher J. Dugaw of California State Polytechnic University, Humboldt, for reading a preliminary draft of the manuscript and generously offering his time and valued suggestions for improving it.
\end{acknowledgement}

\bibliographystyle{abbrvnat}
\bibliography{rcf}

\end{document}